\newtheorem{theorem}{Theorem}
\begin{document}
\title
{Towards a resolution of the Riemann hypothesis}
\author{R.C. McPhedran,\\
School of Physics,\\
University of Sydney}
\maketitle
\begin{abstract}
This article contains work associated with  a resolution of the Riemann hypothesis, following work by Taylor \cite{prt}, Lagarias and Suzuki \cite{lagandsuz} and Ki \cite{ki}, as well as Pustyl'nikov \cite{pust, pust2} and Keiper \cite{keiper}.
Functions $\xi_+(s)$ and $\xi_-(s)$  are considered, for which it is known that all zeros lie on the critical line. The Riemann hypothesis itself pertains to the question of the location of the zeros of the sum and difference of $\xi_+(s)$ and $\xi_-(s)$, and this is investigated.
An argument is developed which prima facie establishes the validity of the Riemann hypothesis. It adds to a necessary condition of Pustyl'nikov a sufficient condition. A second argument is discussed, which could have been accessible to Riemann.
\end{abstract}

\section{Introduction}
The Riemann hypothesis is widely recognised as one of the most important and difficult unsolved problems in mathematics \cite{edw, titheath}. It asserts that all non-trivial zeros of the function $\zeta (s)=\zeta (\sigma+ i t)$ lie on the critical line $\sigma=1/2$.  Since its formulation in 1859 it has inspired the creation of a vast and deep literature, both analytic and numeric.

It is not a purpose here to review this panoply of scholarship, but rather to follow elements of three particular streams concerning functions related to $\zeta (s)$ for which is has been proved that all zeros in fact lie on the critical line. The first stream commenced with a posthumous paper by Taylor \cite{prt}, which proved that the difference of two $\xi$ functions (symmetrised zeta functions) whose argument differed by unity had all its zeros on the critical line. It was continued around sixty years later by papers of Lagarias and Suzuki \cite{lagandsuz} and Ki \cite{ki}, which showed that the sum of the same functions had all its zeros on the critical line.

The second stream derives from the work of Pustyl'nikov \cite{pust, pust2}. He derived an integral form for the derivatives of one of the two $\xi$ functions studied by the previous group of authors. He established that all these derivatives were positive reals, and that this was a necessary condition for the Riemann hypothesis to hold. Here we establish a convenient power series for the second $\xi$ function, and series for the symmetric and antisymmetric combinations of the $\xi$ series, as well as bounding relations among these functions along the real axis of $s$.

The third stream is concerned with relations between the coefficients of power series of $\xi$ and its logarithm and sums over inverse powers of the zeros of $\xi$ \cite{lehmer, keiper, li,bomblag}. These relations enable necessary conditions to be established for the 
Riemann hypothesis to hold, and to do this use a mapping from the critical line to the unit circle.

The outline of the rest of this article is as follows. In Section 2, we give definitions and basic properties of the functions we deal with.
We also consider mappings for zeros of functions onto straight lines and circles illustrated in Fig. \ref{fig-regions}, with the symmetric and antisymmetric combinations of $\xi$ functions having all their zeros on the unit circle. The Riemann hypothesis requires then the zeros of the two $\xi$ functions to lie on a circle of radius one half and a straight line: these touch at the rightmost point of the unit circle in the transformed variable $w$. Section 3 discusses power series for the various functions $\xi$ and their combinations, both in $s$ and the transformed variable $w$. The sums over inverse powers of zeros are connected with these series, and bounding relations are established for real $w$. Section 4 gives the main result of this paper, which is that Pustyl'nikov's necessary condition is also sufficient- a prima facie proof of the Riemann hypothesis.  The argument is based on establishing the radius of convergence of a series for $\log \xi$, given the fact that the symmetric and antisymmetric functions obey the Riemann hypothesis, and the bounds on the real $w$ axis. Section 5 contains a parallel argument, relying on elements probably known to Riemann. Section 6 exhibits power series of various functions discussed in this paper, up to order 12 and with coefficients to  24 decimals.

Note that all calculations for this paper have been performed on a laptop (MacBook Pro, late 2013, 16 GB) in Mathematica, and thus are readily checked by those having computers of moderate power and adequate software.

\section{Definitions}
The function $\xi(s)$ is even under $s\rightarrow 1-s$ and is defined as
\begin{equation}
\xi(s)=\frac{1}{2}  s(s-1) \frac{\Gamma (s/2)\zeta (s)}{\pi^{s/2}}=\frac{1}{2}  s(s-1) \xi_1(s).
\label{xi1}
\end{equation}
We will also be interested in the following two combinations of $\xi$:
\begin{equation}
\xi_+(s)=\frac{1}{2}\left[\xi\left( s+\frac{1}{2}\right)+\xi\left( s-\frac{1}{2}\right)\right],
\label{bpust5}
\end{equation}
and 
\begin{equation}
\xi_-(s)=\frac{1}{2}\left[\xi\left( s+\frac{1}{2}\right)-\xi\left( s-\frac{1}{2}\right)\right].
\label{bpust6}
\end{equation}
These are respectively even and odd under the substitution $s\rightarrow 1-s$.
The fact that all the non-trivial zeros of the function $\xi_-(s)$ lie on the critical line was first established by P.R. Taylor \cite{prt}. The location of 
all the non-trivial zeros of the function $\xi_+(s)$ on the critical line was established by Lagarias and Suzuki \cite{lagandsuz}, Ki \cite{ki}, and McPhedran and Poulton \cite{mcp13}.

We will be interested in mappings which move the location of lines along which zeros are located onto circles in the complex plane.
For the case of  $\xi_-(s)$ and  $\xi_+(s)$ the mapping from the  critical line $\Re (s)=\sigma=1/2$ onto the unit circle is
\begin{equation}
w=u+i v=1-\frac{1}{s}=\frac{s-1}{s}.
\label{tr1}
\end{equation}
The inverse transformation is
\begin{equation}
s=\frac{1}{1-w}.
\label{tr2}
\end{equation}

For the function $\xi\left( s+1/2\right)$ the corresponding forward transformation is
\begin{equation}
w_h=1-\frac{1}{s+1/2}=\frac{s-1/2}{s+1/2}.
\label{tr3}
\end{equation}
Its  inverse transformation is
\begin{equation}
s=-\frac{1}{2}+\frac{1}{1-w_h}.
\label{tr4}
\end{equation}
The zeros of $\xi\left( s+1/2\right)$ lie on $\sigma=0$ and are mapped onto the unit circle in the plane of complex $w_h$.

For the function $\xi\left( s-1/2\right)$ the corresponding forward transformation is
\begin{equation}
w_m=1-\frac{1}{s-1/2}=\frac{s-3/2}{s-1/2}.
\label{tr5}
\end{equation}
Its  inverse transformation is
\begin{equation}
s=\frac{1}{2}+\frac{1}{1-w_m}.
\label{tr6}
\end{equation}
The zeros of $\xi\left( s-1/2\right)$ lie on $\sigma=1$ and are mapped onto the unit circle in the plane of complex $w_m$.

We next connect $w_h$ and $w_m$  to the complex variable $w$. Eliminating $s$ between (\ref{tr1}) and (\ref{tr4}) we find
\begin{equation}
w_h=\frac{1+w}{3-w}, ~~w=\frac{3 w_h-1}{w_h+1}.
\label{tr7}
\end{equation}
The equation for the fixed point of this transformation is
\begin{equation}
w (3-w)=1+w, ~{\rm or}~ (w-1)^2=0.
\label{tr8}
\end{equation}
The fixed point is then of second order at $w=w_h=1$. The corresponding equations relating to $w_m$ are
\begin{equation}
w_m=\frac{3 w-1}{w+1}, ~~w=\frac{1+w_m}{3-w_m},
\label{tr9}
\end{equation}
while again the fixed point for the transformation yields $w=1=w_m$ being of second order.

Figure \ref{fig-regions} illustrates curves of constant modulus pertaining to these discussions. The black unit circle corresponds to $|w|=1$, and thus to the mapping of the critical line through equation (\ref{tr1}). The blue circle centred on $w=1/2$ of radius $1/2$ corresponds to the constraint $|w_m|=1$, and the mapping of the line $\sigma=1$, with the blue vertical line being $|w_m|=3$, $u=-1/3$.  The red lines are for $|w_h|=1$ and $|w_h|=2$. The two fixed points occur where the two circles touch, with the line $|w_h|=1$ being tangent to both.

\begin{figure}[tbh]
\includegraphics[width=7.5 cm]{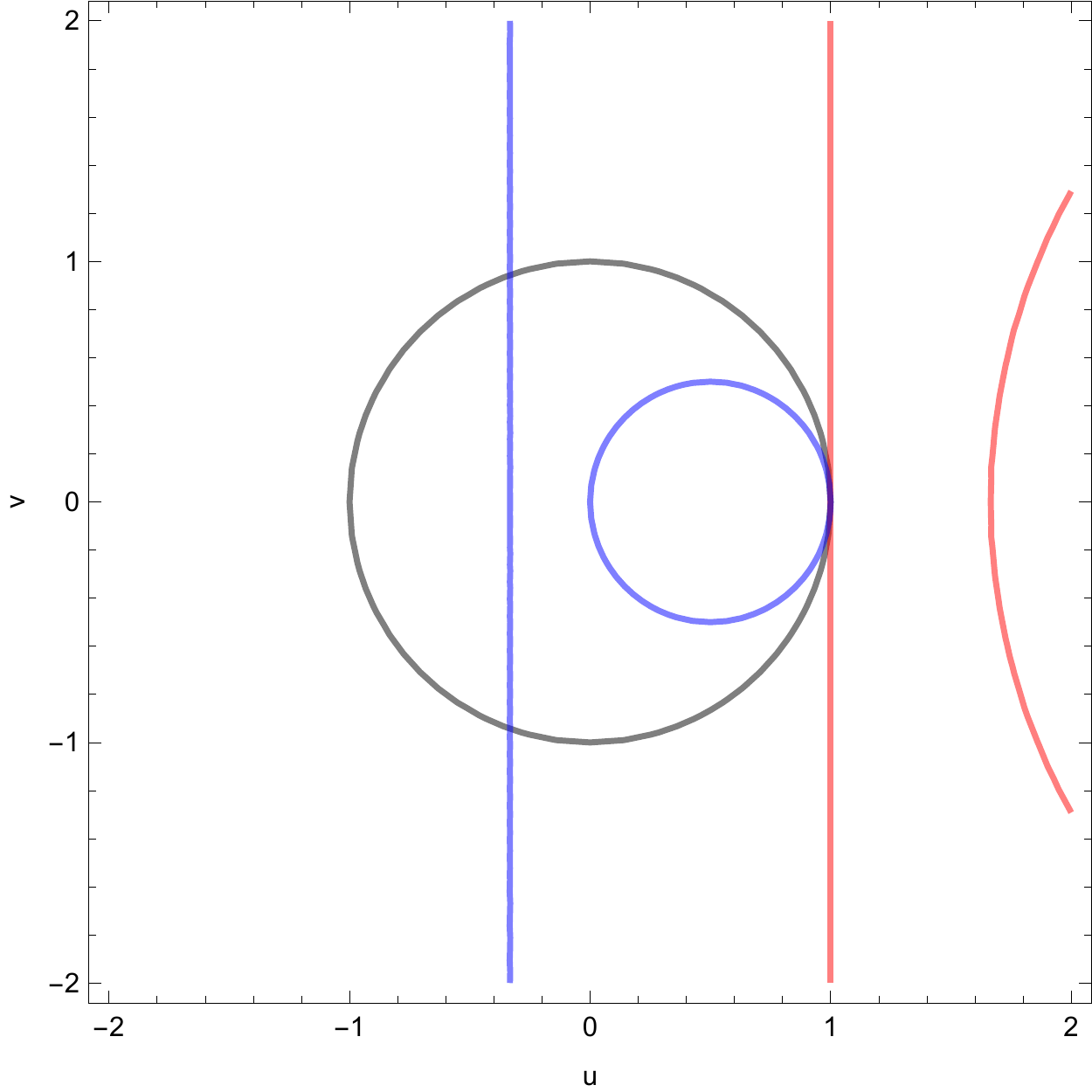}
\caption{ The transformations pertaining to $w$ (black circle), $w_m$ (blue circle and blue line) and $w_h$ (red lines).}
\label{fig-regions}
\end{figure}

We give in Table 1 values for $w$, $s$, $\xi(s+1/2$ and $\xi(s-1/2)$ at key points in Fig. \ref{fig-regions}.
\begin{table}
\label{tab1}
\begin{center}
\begin{tabular}{|c|c|c|c|}\hline
$w$ & $s$ & $\xi(s+1/2)$ &$\xi(s-1/2)$ \\ \hline 
 -1 & $1/2$ & $1/2$& $1/2$\\
 $-1/3$ & $3/4$ & $0.503621$ & $0.497839$ \\
 $0$ & $1$ & $0.508371$ & $0.497121$\\
 $1/2$ & $2$ & $0.545094$ & $0.508731$ \\
 $0.9$ & $10$ & $4.31356$ & $2.9175$ \\
 $0.95$ & $20$ & $1024.78$ & $531.726$ \\  \hline
\end{tabular}
\caption{ Data for key points in Fig. \ref{fig-regions}.}
\end{center}
\end{table}
Note the equality of $\xi(s+1/2)$  and $\xi(s-1/2)$ for $s=1/2$ means that $\xi_{-}(1/2)$ is zero. Also, the last two rows indicate that 
  $\xi(s+1/2)$ and $\xi(s-1/2)$ diverge faster than at a simple pole as $w\rightarrow 1$.         
\section{Expansions of $\xi$}
We commence with the expansion of $\xi (s+1/2)$, based on the work of Pustyl'nikov \cite{pust, pust2}:
  \begin{equation}
 \xi \left( s+\frac{1}{2}\right)=\sum_{r=0}^\infty \xi_r s^{2 r},
 \label{bpust1}
 \end{equation}
 where the coefficients $\xi_r$ can be obtained in integral form from
  \begin{equation}
 \xi_{r}=\frac{2^{-(2 r+2)}}{(2 r)!} {\cal I}_{2 r},
 \label{pust4}
 \end{equation}
 with
 \begin{equation}
{\cal I}_r=  \int_1^\infty [\log (x)^{r-2} ][16 r (r-1)-\log(x)^2] x^{-3/4} \omega(x) d x
 \label{pust4a}
 \end{equation}
and $\omega (x)=\sum_{n=1}^\infty \exp (-\pi n^2 x)$ being related to the  elliptic  theta function $\vartheta_3$.  Pustyl'nikov \cite{pust2}
establishes in his Theorem 1 that all even order derivatives of $\xi (s)$ at $s=1/2$ are strictly positive. His Theorem 2 is that Theorem 1 provides a necessary condition for the Riemann hypothesis to hold. Asymptotic analysis of the integral (\ref{bpust1}) may be found in
Pustyl'nikov \cite{pust}, and supplementary comments  with numeric results are available in \cite{rmcp19}.

We replace $s+1/2$ by $s$ on the left-hand side of (\ref{bpust1})  and expand the consequent right-hand side terms in $(s-1/2)$ using the Binomial Theorem. After collecting  powers of $s$ we arrive at:
  \begin{equation}
 \xi (s)=\sum_{n=0}^\infty  \left[ \sum_{r\ge n/2}^\infty  \binom{2 r}{n} \xi_r \left(\frac{-1}{2}\right)^{2 r-n} \right]  s^{n}.
  \label{bpust2}
 \end{equation}
 Replacing $s$ by $-s$ in (\ref{bpust2}) and using the even symmetry of $\xi(s)$ under $s\rightarrow 1-s$ we find:
  \begin{equation}
 \xi (1+s)=\sum_{n=0}^\infty  \left[ \sum_{r\ge n/2}^\infty  \binom{2 r}{n} \xi_r \left(\frac{1}{2}\right)^{2 r-n} \right]  s^{n}.
  \label{bpust3}
 \end{equation}
 We can also replace $s$ by $s-1$ in (\ref{bpust1}) to give:
  \begin{equation}
 \xi \left( s-\frac{1}{2}\right)=\sum_{n=0}^\infty \left[\sum_{2 r\ge n}^\infty \binom{2 r}{n} \xi_r\right]  (-1)^n s^{n}.
 \label{bpust4}
 \end{equation}
 Separating the expression in (\ref{bpust4}) into the contributions from odd and even $n$:
  \begin{equation}
 \xi \left( s-\frac{1}{2}\right)=\sum_{n=0}^\infty \left[\sum_{ r= n}^\infty \binom{2 r}{2 n} \xi_r\right]   s^{2 n}
 -\sum_{n=0}^\infty \left[\sum_{ r= n+1}^\infty \binom{2 r}{2 n+1} \xi_r\right]   s^{2 n+1}.
 \label{bpust4a}
 \end{equation}
 
The expansions of  the functions $\xi_+(s)$ and $\xi_-(s)$ are then:
\begin{equation}
\xi_+(s)=\frac{1}{2}\left\{2 \sum_{n=0}^\infty \xi_n s^{2 n}-\sum_{n=0}^\infty \sum_{r=n+1}^\infty \left[s\binom{2 r}{2 n+1}-\binom{2 r}{2 n}\right]\xi_r s^{2 n}\right\} ,
\label{bpust7}
\end{equation}
and
\begin{equation}
\xi_-(s)=\frac{1}{2}\left\{\sum_{n=0}^\infty \sum_{r=n+1}^\infty \left[s\binom{2 r}{2 n+1}-\binom{2 r}{2 n}\right]\xi_r s^{2 n}\right\} .
\label{bpust8}
\end{equation}

For use in the various expansions of this section, the first 5000 constants $\xi_r$ have been evaluated with an accuracy goal of 24 decimals in Mathematica \cite{rmcp19}.

\subsection{Connection of power series with sums over powers of zeros} 

If $f(z)$ is an analytic function having an infinite set of  simple zeros at the points $a_0$, $a_1$, $a_3$, $\ldots$ which tend to infinity as $n\rightarrow \infty$, with $|a_n|\neq 0$ for all $n$, then we have the sum rules for positive integers $m$:
\begin{equation}
\left. \frac{1}{m!}\frac{d^m}{d z^m} \log f(z)\right|_{z=0} =-\frac{1}{m}\sum_n\frac{1}{a_n^m}.
\label{prod6}
\end{equation}
We can find the Taylor series coefficients of $\log [\xi(s)/\xi(0)]$ by employing logarithmic polynomials \cite{qietal}. Denoting the zeros of
$\xi(s)$ by $\rho$, we obtain values for the sums over inverse powers of zeros:
\begin{equation}
\sigma_k=\sum \frac{1}{\rho^k},
\label{prodk1}
\end{equation}
which are always real since every zero $\rho$ can be paired with its conjugate, also a zero. 
Keiper also consider the relationship between the coefficients $\sigma_k$ and those occurring in two further expansions:
\begin{equation}
\frac{\xi'(1/s)}{\xi(1/s)}=\sum_{k=0}^\infty \tau_{k} (1-s)^k ,
\label{ke6}
\end{equation}
and
\begin{equation}
\log (2\xi(1/s)) =\sum_{k=0}^\infty \lambda _{k} (1-s)^k.
\label{ke7}
\end{equation}
He shows that:
\begin{equation}
\tau_0=\sigma_1,
\label{ke8}
\end{equation}
and
\begin{equation}
\tau_k=\sum_{j=1}^k \left(\begin{tabular}{c}
$k-1$\\
$j-1$
\end{tabular}\right) (-1)^j \sigma_{j+1} ~{\rm for} ~ k\ge 1.
\label{ke9}
\end{equation}
Also,
\begin{equation}
\lambda_0= 0,
\label{ke10}
\end{equation}
and
\begin{equation}
\lambda_k=\sum_{j=1}^k \frac{(-1)^{j-1}}{j} \left(\begin{tabular}{c}
$k-1$\\
$j-1$
\end{tabular}\right) \sigma_{j} ~{\rm for} ~ k\ge 1.
\label{ke11}
\end{equation}

The coefficients $\tau_k$ hold a particular interest in relation to the Riemann hypothesis. Indeed, as Keiper shows,
\begin{equation}
\tau_{m-1}=-\sum_\rho  \left(\frac{\rho}{\rho-1}\right)^m \rho^{-2}.
\label{ke12}
\end{equation}
From (\ref{ke12}), if the Riemann hypothesis holds, the $|\tau_k|$ must be bounded by 
\begin{equation}
\sum_\rho |\rho|^{-2}=0.046191479322\ldots .
\label{ke13}
\end{equation}
On the other hand, if the $|\tau_k|$ are bounded, then for no $\rho$, $|\rho|>|1-\rho|$, so the Riemann hypothesis holds.

In terms of the behaviour of the $\lambda_k$, Li's criterion \cite{li, bomblag} states that the Riemann hypothesis is equivalent to 
$\lambda_k\ge 0$ for every positive integer $k$. These two criteria are illustrated  in Fig. \ref{fig-xi1}. The quantities $\tau_k$ decrease as $k$ increases in the range shown, moving further below the limit $0.046191479322$. The quantities $\lambda_k$ increase roughly linearly with $k$ (the slope being about $0.023$), again moving away from the limit of zero. Keiper comments on the difficulty of finding numerically exceptions to the Riemann hypothesis using this sort of behaviour.
 Indeed, if we consider the equation
 \begin{equation}
 \lambda_m=\frac{1}{m}\sum_\rho \left[ 1- \left(\frac{\rho}{\rho-1}\right)^m\right],
 \label{ke14}
 \end{equation}
 then for the quantity in square brackets to become negative for an exception to the Riemann hypothesis with $t>T$,
 we require (roughly) $m>2 T^2$. Currently, $T=O(10^9)$, so $m>O(10^{18})$.
\begin{figure}[tbh]
\includegraphics[width=6 cm]{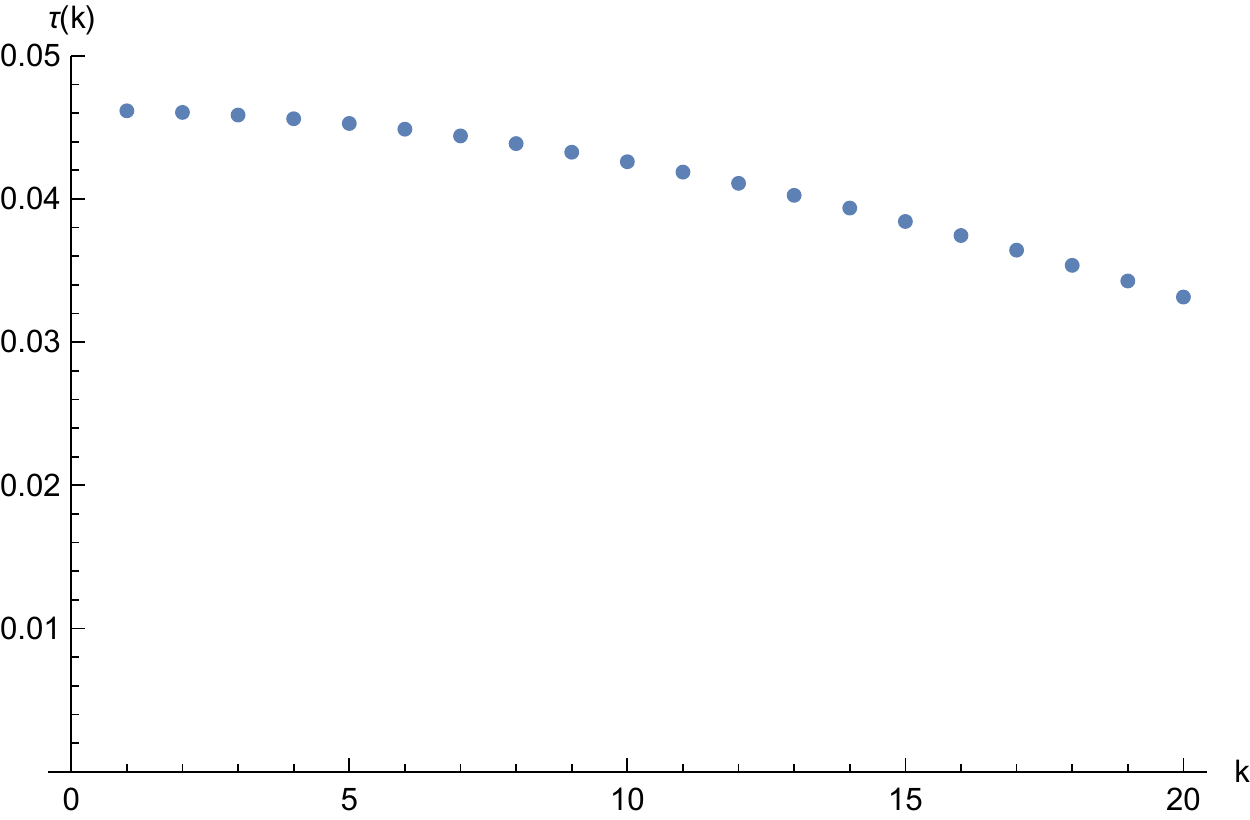}~~\includegraphics[width=6 cm]{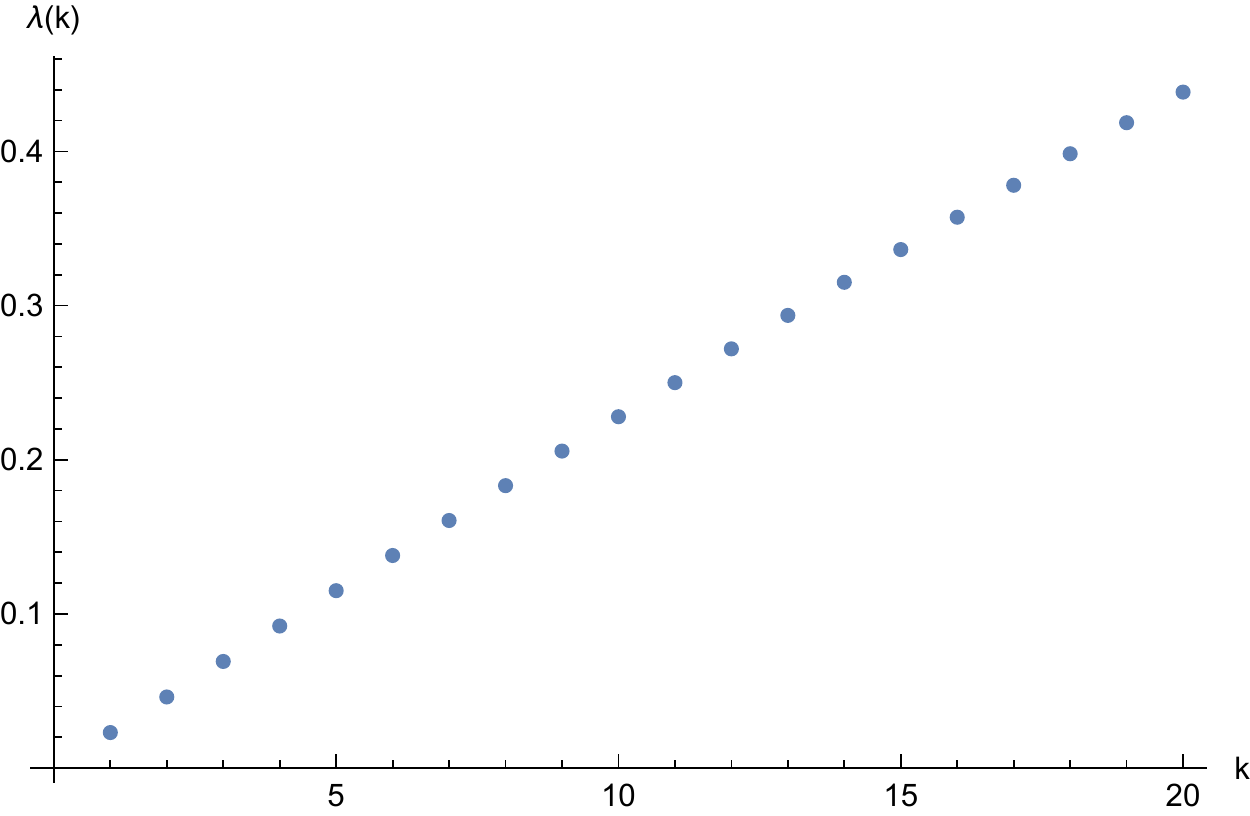}
\caption{(Left)} The coefficients $\tau (k)$ of $\xi (s)$ as a function of their order $k$. (Right) The coefficients $\lambda (k)$ as a function of $k$.
\label{fig-xi1}
\end{figure}
\subsection{Power series in the variable $w$}

The expansions we will now exhibit are those relevant to  Fig. \ref{fig-regions}, namely: for $\xi (s+1/2)$, $\xi (s-1/2)$,
$\xi_+(s)$ and $\xi_-(s)$ with $s$ replaced by $1/(1-w)$ about $w=0$ pertaining to the unit circle in the $w$ plane, and for
$\xi(s-1/2)$ with $s$ replaced by $1/(1-\hat{w})$ pertaining to the circle of radius $1/2$ centred on $\hat{w}=w-1/2=0$. 

From (\ref{bpust1}),
  \begin{equation}
 \xi \left( \frac{1}{1-w}+\frac{1}{2}\right)=\sum_{r=0}^\infty \xi_r \left( \frac{1}{1-w}\right)^{2 r}.
 \label{bpust1w}
 \end{equation}
 Here from Theorem 1 of Pustyl'nikov  \cite{pust2} all the $\xi_r$ are positive, and $\xi_0 \approx 0.497121$. This function is then monotonic increasing as $w$ increases from negative infinity up to $w=1$, where it encounters an essential singularity, with all its derivatives tending to infinity.

 We can also find the expansion for $\xi (s-1/2)$ using (\ref{bpust1}):
 \begin{equation}
  \xi \left( \frac{1}{1-w}-\frac{1}{2}\right)= \xi \left( \frac{w}{1-w}+\frac{1}{2}\right)=\sum_{r=0}^\infty \xi_r \left( \frac{w}{1-w}\right)^{2 r}.
  \label{bpust1wa}
 \end{equation}
 Once again, the form (\ref{bpust1wa}) is monotonic increasing as $w$ increases from negative infinity up to $w=1$, where it encounters an essential singularity, with all its derivatives tending to infinity.
 
 Combining (\ref{bpust1w}) and (\ref{bpust1wa}), we find that
 \begin{equation}
 \xi_+ \left( \frac{1}{1-w}\right)=\frac{1}{2}\sum _{r=0}^\infty \xi_r  \left[\frac{(1+ w^{2 r})}{(1-w)^{2 r}}\right],
 \label{bpust2w}
 \end{equation}
 and
  \begin{equation}
 \xi_- \left( \frac{1}{1-w}\right)=\frac{1}{2}\sum _{r=0}^\infty \xi_r  \left[\frac{(1- w^{2 r})}{(1-w)^{2 r}}\right].
 \label{bpust2wa}
 \end{equation}
 The leading terms as $w\rightarrow 1$ in these two expansions are respectively $\xi_r/(1-w)^{2 r}$ and $r \xi_r/(1-w)^{2 r-1}$, so the former will be much larger than the latter- see Fig \ref{figcfxi}. Both will increase monotonically for increasing real $w$.
 
 From (\ref{bpust1}-\ref{bpust2wa}), for real $w<1$ we have the inequalities:
 \begin{equation}
 \xi\left( \frac{1}{1-w}+\frac{1}{2}\right)> \xi_+ \left( \frac{1}{1-w}\right)>\xi_- \left( \frac{1}{1-w}\right)>0 ,
 \label{ineq1}
 \end{equation}
 and
 \begin{equation}
  \xi_+ \left( \frac{1}{1-w}\right)>\xi \left( \frac{1}{1-w} -\frac{1}{2}\right)>0.
 \label{ineq2}
 \end{equation}
  
\begin{figure}[tbh]
\includegraphics[width=7.5 cm]{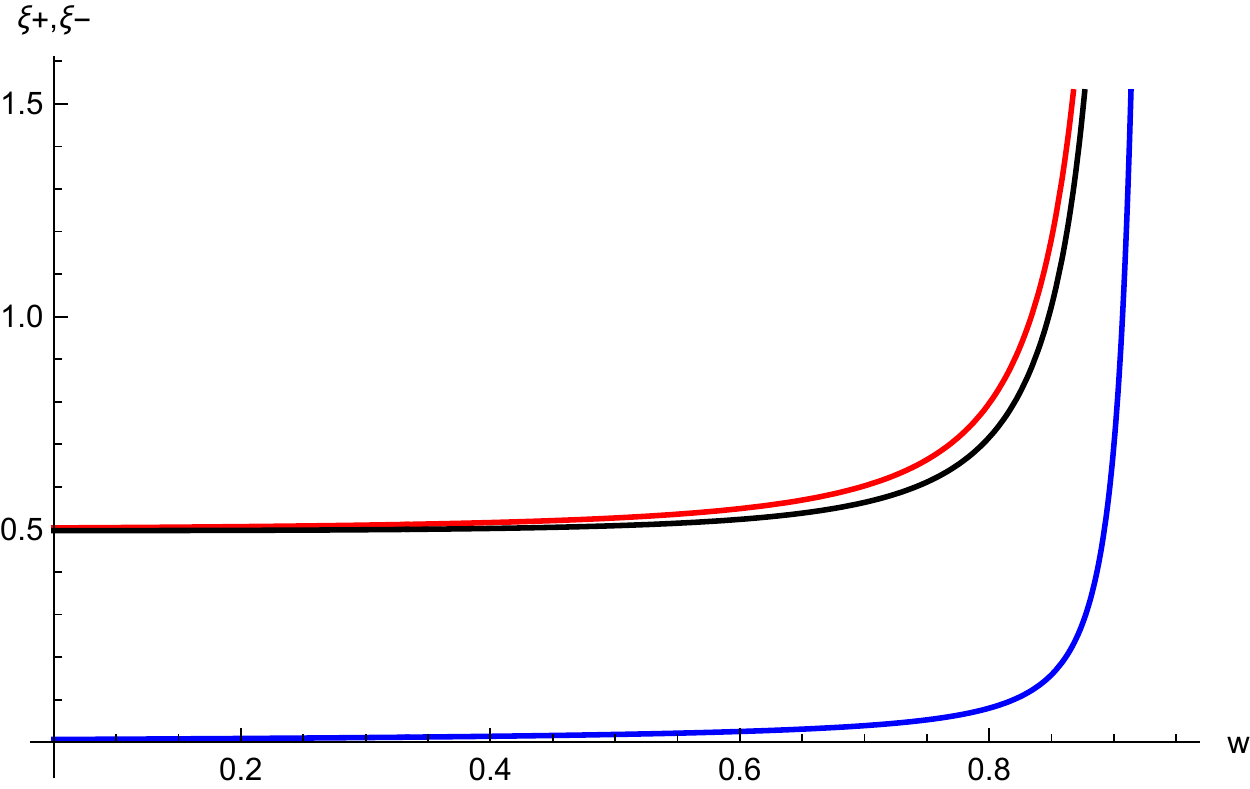}~\includegraphics[width=7.5 cm]{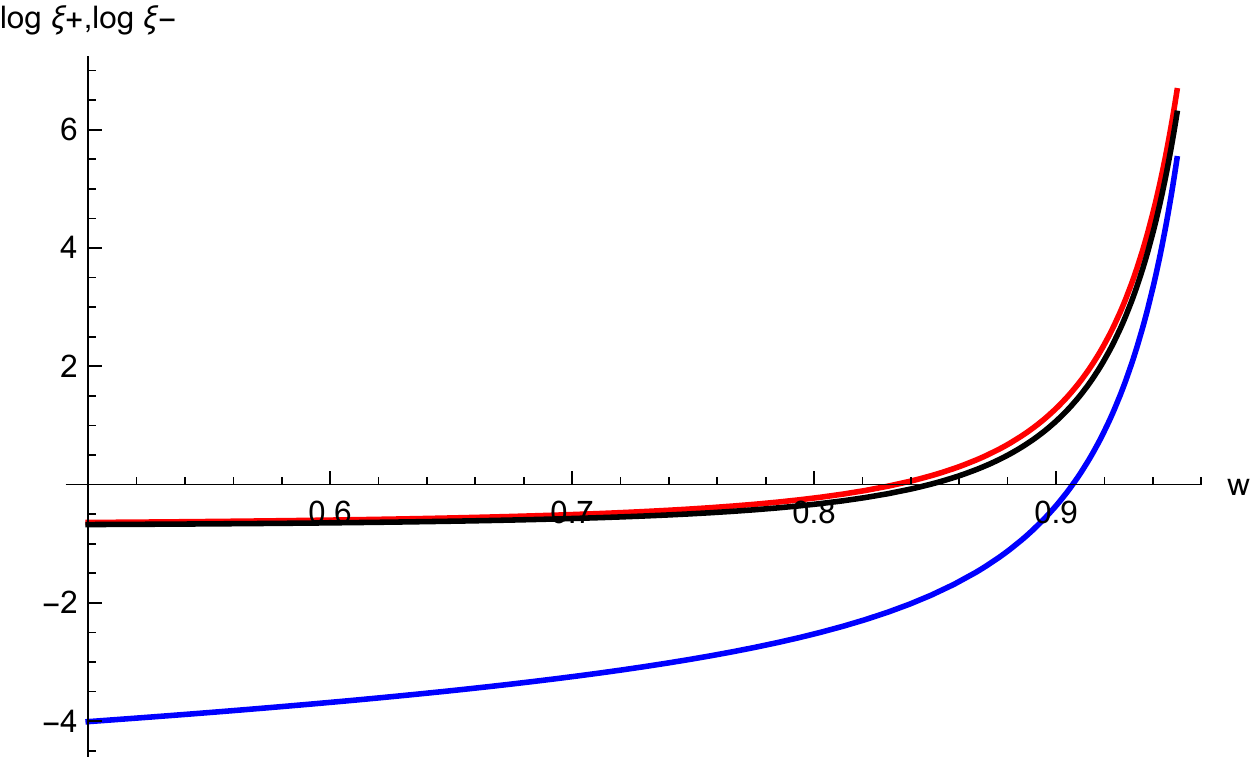}
\caption{(Left) The functions $\xi_+$ (red) and $\xi_-$ (blue) as a function of real $w$.The black curve corresponds to $\xi(s-1/2)$, and to the difference of the red and blue curves.  (Right) The corresponding plot of logarithms of the functions.}
\label{figcfxi}
\end{figure} 
\section{Extensions of Pustyl'nikov's results}
We have already mentioned theorems due to Pustyl'nikov: all the coefficients $\xi_r$ are positive, and that this is a necessary condition for the Riemann hypothesis to hold. The aim of this section is to provide a proof that the condition is also sufficient.
\begin{theorem}
If all the coefficients $\xi_r$ are positive, then all zeros of $\xi (s-1/2)$ lie on the line $\sigma=1$.
\label{suffcond}
\end{theorem}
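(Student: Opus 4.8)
The plan is to reformulate the statement as a question about a single even entire function, and then to pin down its zero locus through the analytic structure of a logarithmic series, using positivity only as the input that controls the real axis. Writing $\phi(x)=\sum_{r\ge 0}\xi_r x^{2r}=\xi(x+\tfrac12)$, the relation $\xi(s-\tfrac12)=\phi(s-1)$ shows that ``all zeros of $\xi(s-1/2)$ lie on $\sigma=1$'' is equivalent to ``all zeros of $\phi$ are purely imaginary''. Positivity of the $\xi_r$ gives, for free, that $\phi$ has no real zeros and is strictly increasing for $x>0$; in the variable $w=1-1/s$ this is precisely the monotonicity and the chain of inequalities (\ref{ineq1})--(\ref{ineq2}) on the real segment $w\in(-1,1)$. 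Thus positivity pins down $\phi$ completely on the real axis, and the entire difficulty is to propagate this control into the complex plane.

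First I would fix the singularity structure of the symmetric and antisymmetric functions. Since $\xi_+$ and $\xi_-$ are known to obey the Riemann hypothesis, their zeros in the $w$-plane lie on $|w|=1$; because $|w|<1$ corresponds to $\Re(s)>1/2$, the map $\xi_+(1/(1-w))$ is analytic and non-vanishing on the open disc, its nearest singularities (the essential singularity at $w=1$ and the boundary zeros) all sitting at distance exactly $1$. Hence $\log\xi_+(1/(1-w))$ has radius of convergence exactly $1$ about $w=0$. I would then write $\xi(s-\tfrac12)=\xi_+(1-q)$ with $q=\xi_-/\xi_+$, so that
\[
\log\xi\!\left(\tfrac{1}{1-w}-\tfrac12\right)=\log\xi_+\!\left(\tfrac{1}{1-w}\right)+\log\bigl(1-q(w)\bigr),
\]
and observe that $q$ is analytic in $|w|<1$ (no poles, as $\xi_+\neq0$ there), real with $0<q<1$ on $(-1,1)$ by (\ref{ineq1}), and $q\to1$ as $w\to1^-$. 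The branch points of $\log(1-q)$ are exactly the zeros of $\xi(s-1/2)$, so the radius of convergence of the left-hand series equals the distance from the origin to the nearest such zero. The task reduces to computing that radius independently and matching it to the nearest point of the blue circle $|w-1/2|=1/2$ realised by the first zero.

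The decisive step is to show that positivity of the $\xi_r$, together with the real-axis bounds, forbids any zero of $\xi(s-1/2)$ from lying strictly inside the radius set by the first blue-circle zero and off that circle. Concretely, I would compute $\limsup_n|c_n|^{1/n}$ for the coefficients $c_n$ of $\log(1-q)$ -- these are expressible through the sums $\sigma_k$ over inverse powers of zeros and the Keiper coefficients $\tau_k,\lambda_k$ of the preceding subsection -- and argue that the positivity-driven behaviour of $q$ on $(-1,1)$ forces the nearest singularity into the configuration dictated by the blue circle. I expect this matching to be the main obstacle, and indeed the genuine heart of the problem: positivity of the coefficients is a statement purely about the real axis and the Taylor data, whereas excluding a complex zero off the blue circle is a statement about the interior of the disc. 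Bridging that gap is exactly the passage from Pustyl'nikov's necessary condition to a sufficient one, and it is the step whose rigour must be scrutinised most carefully, since positivity alone does not place an even entire function with positive coefficients in the Laguerre--P\'olya class.
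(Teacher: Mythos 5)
Your setup reproduces the paper's proof almost exactly: you invoke Taylor, Lagarias--Suzuki and Ki to place all zeros of $\xi_+$ and $\xi_-$ on the unit circle in the $w$-plane, you use positivity of the $\xi_r$ only through the real-axis monotonicity and the inequalities (\ref{ineq1})--(\ref{ineq2}), and you reduce Theorem \ref{suffcond} to showing that the expansion of $\log \xi(1/(1-w)-1/2)$ about $w=1/2$ has radius of convergence exactly $1/2$ (the inner circle of Fig.~\ref{fig-regions}). But your proposal then stops: you explicitly defer the decisive step --- excluding a zero of $\xi(s-1/2)$ strictly inside the blue circle (equivalently, by the pairing $s\leftrightarrow 2-s$ from the functional equation, any zero off it) --- to an unspecified computation of $\limsup_n|c_n|^{1/n}$ through the quantities $\sigma_k$, $\tau_k$, $\lambda_k$, with no mechanism by which positivity of the $\xi_r$ would control that limsup. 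As it stands, therefore, the proposal is not a proof: the passage from real-axis positivity to a statement about complex zeros, which you correctly identify as the entire content of the theorem, is missing.

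You should know how the paper itself fills that hole, because it bears directly on your closing caveat. The paper argues by comparison along the real ray: from (\ref{ineq2}), $0<\xi(1/(1-w)-1/2)<\xi_+(1/(1-w))$ for real $w<1$; the series for $\xi_+(1/(1-w))$ converges up to $w=1$; hence, the paper claims, the monotone increasing series for $\log\xi(1/(1-w)-1/2)$ ``cannot diverge before'' that of $\log\xi_+(1/(1-w))$, so it converges on all of $[1/2,1)$ and the radius about $w=1/2$ is exactly $1/2$. Your instinct that this step is the crux, and that real-axis data alone cannot supply it, is sound: pointwise majorization of two positive functions on a real segment implies nothing about the relative convergence of their Taylor expansions, since a radius of convergence is set by the nearest \emph{complex} singularity. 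A function can be positive, increasing and dominated by an analytic majorant on $[1/2,1)$ while its expansion about $w=1/2$ diverges on part of that segment because of an off-axis zero --- which is exactly the configuration the theorem must rule out, so the comparison step assumes what is to be proved. (Such a comparison would be legitimate with term-wise domination of non-negative Taylor coefficients, but non-negativity of the coefficients of $\log\xi(1/(1-w)-1/2)$ about $w=1/2$ is never established, and the logarithm of a series with positive coefficients need not have positive coefficients.) Your remark that positivity of the even Taylor coefficients does not place an entire function in the Laguerre--P\'olya class is precisely the right counterpoint. In short: you took the same route as the paper, stopped honestly at the gap, and the gap you refused to bridge is the very step at which the paper's own argument fails to be rigorous.
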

\begin{proof}
We know from the results of Taylor \cite{prt}, Lagarias and Suzuki  \cite{lagandsuz} and Ki \cite{ki} that all zeros of $\xi_+(s)$ and $\xi_-(s)$ lie on $\sigma =1/2$, which corresponds to the unit circle in the $w$ plane. The radii of convergence of the power series of  $\log \xi_+(1/(w-1))$ and $\log\xi_-(1/(w-1))$ about $w=0$ are then unity. The singularities of these two logarithmic functions lie at the zeros of $\xi_+(s)$ and $\xi_-(s)$ and at the essential singularity of each, which is at $w=1$. They can be re-expanded about $w=1/2$, to give power series which have a radius of convergence of $1/2$, diverging at the essential singularity.

Our objective is to prove that the radius of convergence of the power series of $\log \xi(1/(w-1)-1/2$ about $w=1/2$ is 1/2: this corresponds to the inner circle of Fig. \ref{fig-regions}. The essential singularity of this function lies at $w=1$, and from the fact that an infinite number of zeros of $\zeta (s)$ lies on $\sigma =1/2$, we know that these lie on the inner circle. The radius of convergence can thus not exceed unity. Now, by the theory of power series \cite{churchill}, they converge absolutely everywhere inside the circle of convergence, and diverge everywhere outside it. Thus, all we need to prove is that the power series for $\log \xi(1/(w-1)-1/2)$ about the centre $w=1/2$ converges everywhere along the ray from $w=1/2$ with $w$ real, for $w<1$. However, this is trivially true, since we have shown in the inequality (\ref{ineq2}) that the power series for  $\xi(1/(w-1)-1/2)$ lies between that  for $\xi_+(1/(w-1))$and zero, while the series for $\xi_+(1/(w-1))$   converges up to $w=1$. Consequently, the power series for   $\log \xi(1/(w-1)-1/2)$ lies below that for  $\log \xi_+(1/(w-1))$  up to $w=1$ (by the monotonicity of the logarithmic function). Both logarithmic power series are monotonic increasing, and so the lower one cannot diverge before the upper one, while both diverge at $w=1$.  The radius of convergence of the series for $\log \xi(1/(w-1)-1/2)$ must then be precisely one half.
\end{proof}
\section{Riemann's reasoning?}
In this section, we present an alternative argument  to the Theorem of the previous section, which does not rely on the results of Pustyl'nikov, and
is built from ideas probably accessible to Riemann. We consider the expansion of $\log \xi(s-1/2)$:
\begin{equation}
\log \xi \left(s-\frac{1}{2}\right)=\log\left[ \frac{1}{2} \left( s-\frac{1}{2}\right)  (s-1) \frac{\Gamma \left( \frac{1}{2} \left( s-\frac{1}{2}\right)\right)}{\pi^{\left( \frac{1}{2} \left(s-\frac{1}{2}\right)  \right)}}\right]+\log\zeta \left(s-\frac{1}{2}\right) .
\label{riereas1}
\end{equation}
We treat the two parts on the right separately. We replace in each $s-1/2$ by $1/(1-w)-1/2$ and expand for $w$ approach unity from below. The series for the prefactor is
\begin{equation}
\frac{\log [2 e \pi (1-w)]}{2 (w-1)}+\frac{1}{4} \left( \log \left(2\pi^3 \right)-5\log (1-w)\right)+\frac{73}{48} (w-1)+O(w-1)^2.
\label{riereas2}
\end{equation}
The series for $\zeta$ is absolutely convergent for $w$ real, and is in truncated form
\begin{equation}
\log \zeta \left( \frac{1+w}{2(1-w)}\right)\approx\log \left[1+2^{-\frac{1+w}{2(1-w)}}+3^{-\frac{1+w}{2(1-w)}}+4^{-\frac{1+w}{2(1-w)}}\right].
\label{riereas3}
\end{equation}
The approximation (\ref{riereas3}) has a relative error of 2.5\% at $w=0.8$, 0.5\% at $w=0.85$ and 0.02\% at $w=0.9$.

\begin{figure}[tbh]
\includegraphics[width=7.5 cm]{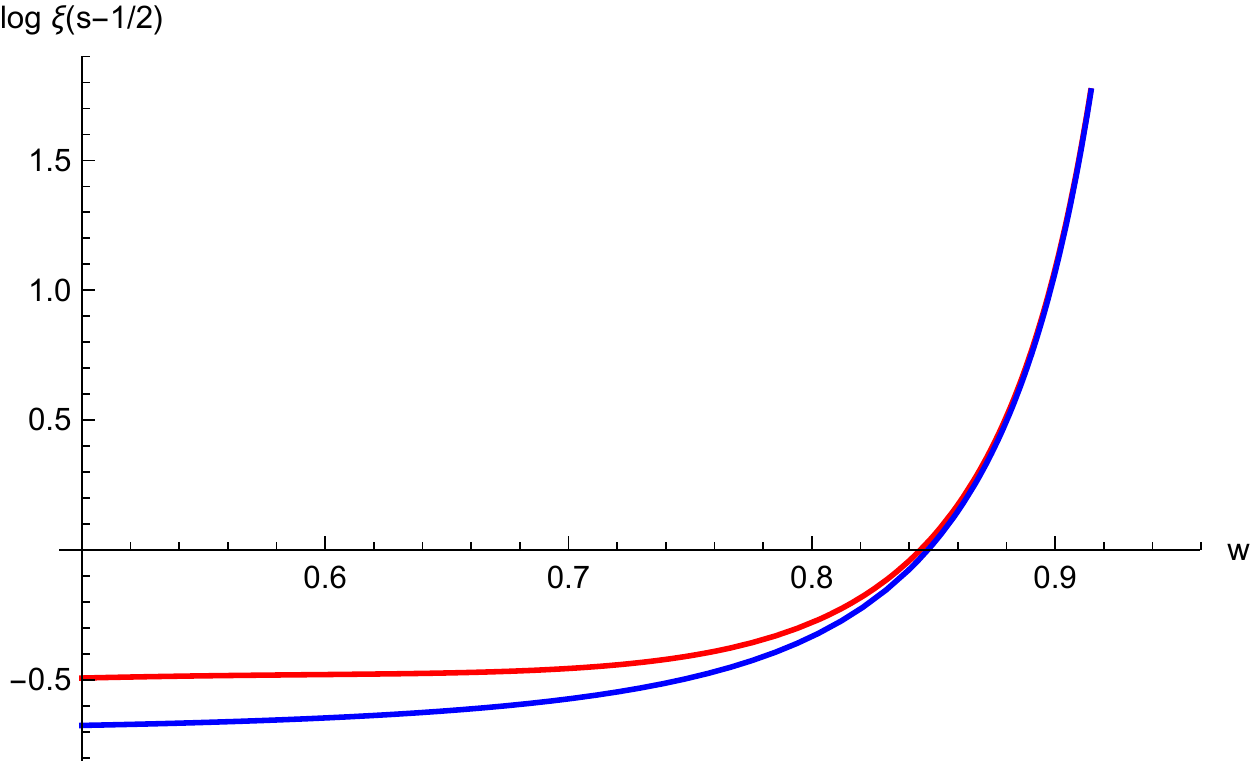}
\caption{ The blue curve gives the value of $\log \xi(1/(1-w)-1/2)$ and the red curve shows the result of combining (\ref{riereas2}) and  (\ref{riereas3}).}
\label{fig-xiapprox}
\end{figure}

Fig. \ref{fig-xiapprox} shows that the combination of (\ref{riereas2}) and  (\ref{riereas3}) gives an increasingly accurate result as $w$ increases beyond 0.8. Knowledge of only the first term in (\ref{riereas2}) is sufficient to indicate that the prefactor contribution increasingly dominates the zeta-function contribution as $w$ tends up towards unity. Hence, it is evident that $\log \xi(1/(1-w))$ is finite
on the real axis of $w$ for every value below unity, so that the radius of convergence of its power series about $w=1/2$ cannot be other than one half. This forms a simple argument for the location of all zeros of $\zeta (s-1/2)$ being on $\sigma=1$.
 
\section{Expansions in numerical form}
\subsection{Power series in the variable $s$}
In the following, we will exhibit both analytic expressions and numerical forms of power series for various functions. The former will be limited by the rapid increase of complexity of coefficient expressions with order. Firstly, for $\xi (s+1/2)$,
\begin{equation}
\xi(s+1/2)=C_0+C_1 s+C_2 s^2+C_3 s^3+C_4 s^4+\ldots,
\label{bpust9}
\end{equation}
where
\begin{equation}
C_0=-\frac{\zeta \left(\frac{1}{2}\right) \Gamma \left(\frac{1}{4}\right)}{8 \sqrt[4]{\pi
   }},
 \label{bpust10}
\end{equation}
and
\begin{equation}
C_1=0=\frac{\Gamma \left(\frac{1}{4}\right) \left(-2 \zeta
   '\left(\frac{1}{2}\right)+\zeta \left(\frac{1}{2}\right) \log (\pi )-\zeta
   \left(\frac{1}{2}\right) \psi ^{(0)}\left(\frac{1}{4}\right)\right)}{16 \sqrt[4]{\pi
   }} .
\label{bpust11}
\end{equation}
From 
(\ref{bpust11}) we deduce
\begin{equation}
\zeta'\left(\frac{1}{2}\right)=\frac{1}{2} \zeta \left(\frac{1}{2}\right)\left[ \log (\pi)-\psi^{(0)}\left(\frac{1}{4}\right) \right].
\label{bpust12}
\end{equation}
In this way, all odd order derivatives of $\zeta (s)$ at $s=1/2$ may be eliminated from power series. This gives:
\begin{equation}
C_2=\frac{\Gamma \left(\frac{1}{4}\right) \left(\zeta \left(\frac{1}{2}\right)
   \left(32+\log ^2(\pi )+\psi ^{(0)}\left(\frac{1}{4}\right)^2-\psi
   ^{(1)}\left(\frac{1}{4}\right)-2 \log (\pi ) \psi
   ^{(0)}\left(\frac{1}{4}\right)\right)-4 \zeta ''\left(\frac{1}{2}\right)\right)}{64
   \sqrt[4]{\pi }} .
   \label{bpust13}
   \end{equation}
  Numerically, the even order  coefficients up to order 10 are:\\
   (0, 0.497120778188314109912774), (2, 
  0.0114859721575727187676249), \\ (4, 
  0.000123452018070318006890346),
   (6, 
  8.32355481385527072004759  $10^{-7}$), \\ (8, 
  3.99222655134413717472527 $10^{-9}$), 
  (10, 1.46160257601109608624121 $10^{-11}$).
  
Next, for $\xi (s-1/2)$,
\begin{equation}
\xi(s-1/2)=D_0+D_1 s+D_2 s^2+D_3 s^3+D_4 s^4+\ldots ,
\label{bpust9m}
\end{equation}
where
\begin{equation}
D_0=\frac{3}{8} \pi^{1/4} \Gamma( -1/4) \zeta(-1/2), 
\label{bpust10m}
\end{equation}
and
\begin{equation}
D_1=\frac{1}{16}  \pi^{1/4} \Gamma(-1/4)  [-16\zeta(-1/2) - 
   3  \log(\pi) \zeta(-1/2)+ 
   3 \psi_0 (-1/4) \zeta(-1/2) + 
   6 \zeta'(-1/2)].
\label{bpust11m}
\end{equation}
The expressions for $D_2$, $D_3$ and $D_4$ are too complicated to warrant reproduction.
Numerically, the coefficients up to order 10 are:\\
0.508731038726323958025671, -0.0234707786048020825988372, \\
0.0122392820411106099993383, -0.000510680509960582081197381, \\
0.000136219896777660495235434, -5.22141487535619756401486 $10^{-6}$, \\
9.47246998269225412163851 $10^{-7}$, -3.37259460993816995307624 $10^{-8}$, \\
4.67141900041698784170513 $10^{-9}$, -1.55773064531108593374648 $10^{-10}$, \\
1.75421192426233609057205 $10^{-11}$.

Combining (\ref{bpust9}) and  (\ref{bpust9m}),
\begin{equation}
\xi_\pm (s)=E_0^\pm+E_1^\pm s+E_2^\pm s^2+E_3^\pm s^3+E_4^\pm s^4+\ldots.
\label{combpust1}
\end{equation}
For the function $\xi_+(s)$, the numerical values of the first ten coefficients $E_n^+$  are:\\
0.502925908457319033969223, -0.0117353893024010412994186, \\
0.0118626270993416643834816, -0.000255340254980291040598691, \\
0.000129835957423989251062890, -2.61070743767809878200743 $10^{-6}$, \\
8.89801239827376242084305 $10^{-7}$, -1.68629730496908497653812 $10^{-8}$, \\
4.33182277588056250821520 $10^{-9}$, -7.78865322655542966873238 $10^-{11}$, \\
1.60790725013671608840663 $10^{-11}$.\\
For the function $\xi_-(s)$, the first ten coefficients $E_n^-$  are:\\
-0.005805130269004924056449, 0.0117353893024010412994186, \\
-0.0003766549417689456158567, 0.000255340254980291040598691, \\
-6.383939353671244172544 $10^{-6}$, 2.61070743767809878200743 $10^{-6}$, \\
-5.7445758441849170079546 $10^{-8}$, 1.68629730496908497653812 $10^{-8}$, \\
-3.3959622453642533348993 $10^{-10}$, 7.78865322655542966873238 $10^{-11}$, \\
-1.4630467412562000216542 $10^{-12}$ .
\subsection{Numerical results related to those of Keiper}
We now consider relationship between power series  expansions and sums of inverse powers of zeros, building on work of Keiper \cite{keiper} and Lehmer \cite{lehmer}. We start with the expansion of $\xi(s)$ about $s=0$:
\begin{equation}
\xi(s)=F_0+F_1 s+F_2 s^2+F_3 s^3+F_4 s^4+\ldots ,
\label{bpust9k}
\end{equation}
where
\begin{equation}
F_0=\frac{1}{2}, ~F_1=\frac{1}{4} (-2 - \gamma + \log(4 \pi)), 
\label{bpust10k}
\end{equation}
and
\begin{equation}
F_2=\frac{1}{32} (8 \gamma - 6 \gamma^2 + \pi^2 + 
   2 \log(4 \pi) (-4 - 2 \gamma + \log (4 \pi )) - 
   16 \gamma_1) ,
\label{bpust11k}
\end{equation}
where $\gamma_1\approx -0.0728158$ is the Stieltjes gamma constant of order unity. The numerical values of the first eleven coefficients $F_n$ are:\\
0.500000000000000000000000, -0.0115478544830605169071551, \\
0.0116719322671130915674412, -0.000248991924961474336175586, \\
0.000126590865158263502528061, -2.52512739610958708479262 $10^{-6}$, \\
8.60493520930767788890077 $10^{-7}$, -1.61892073094053848017403 $10^{-8}$, \\
4.15798412501386081535438 $10^{-9}$, -7.42620960745947002261292 $10^{-11}$, \\
1.53278011638166567551397 $10^{-11}$.

We obtain for $\sigma_1$ to $\sigma_{10}$:\\
0.0230957089661210338143102, -0.0461543172958046027571080, \\
-0.00011115823145210592276267, 0.00007362722126168951832677, \\
7.15093355762607735801 $10^{-7}$, -2.81436416938766261607 $10^{-7}$, \\
-4.5741911497047721112 $10^{-9}$, 1.26886811095076071901 $10^{-9}$, \\
2.8274371550558870893$10^{-11}$, -5.997714847151874595 $10^{-12}$.

Keiper gives a table to $\sigma$ values accurate to 40 decimal places. Those given here differ by at most one in the last figure.
 \subsection{Numerical Series in $w$}  
 
 The numerical form of the Taylor series of  $\xi (1/(1-w)+1/2$) to order 12 is:\\ 
 \begin{eqnarray}
 \xi \left( \frac{1}{1-w}+\frac{1}{2}\right)&=& 0.508731038726323958025671+ 0.0234707786048020825988372 w  \nonumber\\ &&+0.0357100606459126925981755 
    w^2+0.0484600231969838846787112 w^3 \nonumber\\
&&    +0.0618568861547933193356797
   w^4+0.0760420908309940132618804 w^5  \nonumber \\
   &&  +0.0911632471991126085730891
   w^6+0.107375114867493741415169 w^7 \nonumber \\
   && +0.124840622449609510369871
   w^8
   +0.143731930158926109607415 w^9 \nonumber \\
   && +0.164231540628834875507686
   w^{10}+0.186533463149563699312568 w^{11}\nonumber \\
 &&  +0.210844436724090899321712
   w^{12}+O\left(w^{13}\right).
    \label{bpust1wn}
 \end{eqnarray}
 
  This corresponding  numerical form for $\xi (1/(1-w)-1/2)$ to order 12 is:\\ 
 \begin{eqnarray}
 \xi \left( \frac{1}{1-w}-\frac{1}{2}\right)&=& 0.497120778188314109912774+0.011485972157572718767622 w^2 \nonumber\\
& &+0.022971944315145437535244  w^3+0.034581368490788474309757 w^4 \nonumber\\
& & +0.046437696702572147098050  w^5+0.058665213324048159434086 w^6 \nonumber\\
& & +0.071389867439730985905972  w^7+0.084740109192805809027167 w^8  \nonumber\\
& & +0.098847734117289558795967  w^9+0.113848739461487632047511 w^{10}\nonumber\\
& & +0.12988419653882092022278  w^{11}+0.14710114318598859685984 w^{12}\nonumber \\
&& +O\left(w^{13}\right).
  \label{bpust1wan}
 \end{eqnarray}

The series for the logarithms of (\ref{bpust1wn}) and (\ref{bpust1wan}) are:
\begin{eqnarray}
\log \left[ \xi \left( \frac{1}{1-w}+\frac{1}{2}\right) \right]&=&
-0.675835813236695767842275+0.0461359280604625753594660 w \nonumber \\
& & +0.0691301196352103328072490
   w^2+0.0920509179650365681271473 w^3 \nonumber \\
& &   +0.114880446150576506783275
   w^4+0.137601106517779960377133 w^5 \nonumber \\
& &   +0.160195624167229059202861
   w^6+0.182647089272462999988600 w^7 \nonumber \\
   & & +0.204938997989198298868628
   w^8+0.227055291843200263329215 w^9 \nonumber \\
& &   +0.248980395470907623248291
   w^{10}+0.270699252593709637403009 w^{11} \nonumber \\
& &   +0.292197360113993960590683
   w^{12}+O\left(w^{13}\right),
   \label{lxihser}
   \end{eqnarray}
   and
   \begin{eqnarray}
\log \left[ \xi \left( \frac{1}{1-w}-\frac{1}{2}\right) \right]&=&-0.698922267945331415298362+0.0231049931154189707889338 w^2 \nonumber \\
&& +0.0462099862308379415778676
   w^3+0.0692963930466142775237190 w^4 \nonumber \\
&&   +0.0923456272631053437834055
   w^5+0.115339150638645639171605 w^6 \nonumber \\
&&   +0.138258521047523929818514
   w^7+0.161085440372202462764362 w^8 \nonumber \\
&&   +0.183801802064020339427427
   w^9+0.206389738207265861712433 w^{10} \nonumber \\
&&   +0.228831665922788129186448
   w^{11}+0.251110332949243718084027 w^{12} \nonumber \\
&&   +O\left(w^{13}\right).
 \label{lximser}
   \end{eqnarray}
   
   The series for the logarithms of (\ref{bpust2w}) and (\ref{bpust2wa}) are
    \begin{eqnarray}
    \log \left[ \xi_+\left( \frac{1}{1-w} \right) \right] &=&
   -0.687312419021627833700725+0.023334230957395524232106 w \nonumber \\
   && +0.046649213860405348103991
   w^2+0.069925751526272229957378 w^3  \nonumber \\
   &&+0.093144748336397953034365
   w^4+0.116287260571386560139602 w^5 \nonumber \\
   && +0.139334546211624749855289
   w^6+0.162268114028537401476190 w^7 \nonumber \\
 &&  +0.18506977179446033912675
   w^8+0.20772167344255548785287 w^9 \nonumber \\
 &&  +0.23020636501234089942540
   w^{10}+0.25250682922120233406310 w^{11} \nonumber \\
   &&+0.27460652850767407018678
   w^{12}+O\left(w^{13}\right) ,
   \label{logxipsern}
   \end{eqnarray}
   and 
     \begin{eqnarray}
    \log \left[ \xi_-\left( \frac{1}{1-w} \right) \right] &=&
    -5.1490132232563522103123+2.021554858994170669692 w\nonumber \\
    & & +0.04309595122735336775
   w^2+0.73127620503659101949 w^3 \nonumber \\
  & &  +0.0860819388755765953 w^4+0.5074995561747003636
   w^5 \nonumber \\
  & &  +0.1288489066305341210 w^6+0.435830932894465061 w^7 \nonumber \\
  & &  +0.171289602583570808
   w^8+0.414577014426957098 w^9 \nonumber \\
   & & +0.21329945625114847 w^{10}+0.41592926321690695
   w^{11} \nonumber \\
   & & +0.25477742573189697 w^{12}+O\left(w^{13}\right)  .
   \label{logximsern}
   \end{eqnarray}

\end{document}